\newtheorem{theorem}{Theorem}
\newtheorem{lemma}{Lemma}
\newtheorem{corollary}{Corollary}
\newtheorem{thmletter}{Theorem}
\theoremstyle{definition}
\newtheorem{remark}{Remark}
\newcommand\blfootnote[1]{%
	\begingroup
	\renewcommand\thefootnote{}\footnote{#1}%
	\addtocounter{footnote}{-1}%
	\endgroup
}
\newcommand\R{\mathbb{R}}
\numberwithin{equation}{section}
\title{Approximation of Hausdorff operators}
\date{}
\author{A. Debernardi\footnote{Corresponding author. E-mail: \texttt{adebernardipinos@gmail.com}} }
\author{E. Liflyand}
\affil{\footnotesize Department of Mathematics, Bar-Ilan University, 52900 Ramat-Gan, Israel}
\begin{document}
	
	\maketitle
	
\begin{flushleft}
	\small{AMS 2010 subject classification: 41A25, 42A38, 44A15.\\
		{\bf Keywords and phrases}: Hausdorff operators, approximation in Lebesgue spaces, moduli of continuity}
\end{flushleft}

	\section*{Abstract}
Truncating the Fourier transform averaged by means of a generalized Hausdorff operator,
we approximate the adjoint to that Hausdorff operator of the given function. We find the formulas for the rate of approximation in various metrics in terms of the parameter of truncation and the components of the Hausdorff operator. Explicit rates of approximation and comparison with approximate identities are given in the case of Lipschitz $\alpha$ continuous functions. As an application, we show not only how to approximate the Hausdorff operator of a function, but the function itself in the $L^\infty$ norm. After numerous works on the boundedness of Hausdorff operators on various function spaces, this paper is the first application of Hausdorff operators to the problem of constructive approximation.

\blfootnote{The first author was supported by the ERC starting grant No. 713927 and the ISF grant No. 447/16.}

\section{Introduction}

The classical Hausdorff operator is defined, by means of a kernel $\varphi$, as

\begin{eqnarray}\label{haus1}
({\mathcal H}_{\varphi}f)(x)=\int_{\mathbb R}\frac{\varphi(t)}{|t|}f\Big(\frac{x}t\Big)\,dt,
\end{eqnarray}
and, as is shown first in \cite{Ge} (see also \cite{LM1} or \cite{emj}), such an operator is bounded in $L^1(\R)$ whenever $\varphi\in L^1(\R)$.

In the last two decades various problems related to Hausdorff operators attracted much attention.
The number of publications is growing considerably; to add some most notable, we mention \cite{AL, K, LL, LiMi, LiMi1, Mir}. There are two survey papers: \cite{CFW} and \cite{emj}). In the latter, as well as in \cite{Lif}, numerous open problems are given.

The Hausdorff operator \eqref{haus1} is expected to have better Fourier analytic properties than $f$. For example, in general, the inversion formula
\[
f(x)=\frac{1}{2\pi}\int_\R \widehat{f}(y) e^{ixy}\, dy
\]
does not hold for $f\in L^1(\R)$; in order to ``repair" this, one may consider some transformation of the function $f$ 
or its Fourier transform. In the case of the Hausdorff operator, what we expect is that
\begin{equation}
\label{EQtransforms}
\int_\R (\mathcal{H}_\varphi f)^{\widehat{}}(y)e^{ixy}\,dy \qquad \text{or}\qquad \int_\R (\mathcal{H}_\varphi\widehat{f})(y)e^{ixy}\,dy
\end{equation}
characterizes $f$, in a sense. The latter way seems to be more natural.

Here we analyze not this Hausdorff operator but a more general one, apparently first considered in \cite{Kuang} (see also \cite{Kuang1}). 
Given an odd function $a$ such that $|a(t)|$ is decreasing, positive, and bijective on $(0,\infty)$ 
(so that both $|a|$ and $1/|a|$ possess inverse functions in such an interval), we define
\begin{equation}
\label{EQgeneraldefinition}
({\mathcal H}f)(x)=({\mathcal H}_{\varphi,a}f)(x)=\int_{\mathbb R}{\varphi(t)}|a(t)|f(a(t)x)\,dt\,.
\end{equation}
It is clear that \eqref{haus1} corresponds to \eqref{EQgeneraldefinition} with $a(t)=t^{-1}$, and one can easily derive the corresponding results from the general ones. Moreover, we consider some such particular cases as examples. There is one more reason for considering the most general case: it is a proper basis for future
multidimensional extensions. Indeed, the most general form of the Hausdorff operator in several dimensions is, for $x\in \R^n$,
\begin{equation}
\label{EQhausmultidim}
\mathcal{H}f(x)=\mathcal{H}_{\varphi,A}f(x)=\int_{\R^n} \varphi(u) f(xA(u))\,du,
\end{equation}
where $A(u)$ is an $n\times n$ matrix with entries  $\{a_{i,j}(u)\}_{i,j=1}^n$, depending on $u$. The matrix $A(u)$ should be nonsingular almost everywhere, and $xA(u)$ denotes the usual product of a vector and a matrix. This form was introduced independently in \cite{BM} and \cite{LL}. It is clear that \eqref{haus1} is the particular case of \eqref{EQhausmultidim} in dimension one. Considering the multivariate Hausdorff operator to be of the form \eqref{EQhausmultidim} proved to be most beneficial. To mention some achievements, we refer to \cite{LL}, where the problem of the boundedness of the Hausdorff operator in the real Hardy space was solved, and \cite{LiMi1}, where not only the results from \cite{LiMi} were extended, but new phenomena related to the Hardy spaces $H^p$, with $0<p<1$, were discovered.

The consideration of these ``alternative'' transformations such as \eqref{EQtransforms} requires developing a parallel theory to Fourier integrals. In this paper we address two basic issues of approximation theory applied to (generalized) Hausdorff operators.
\begin{enumerate}
	\item To find the operator $T$ such that the integrals  of the type
	\[
	\int_{-N}^N ({\mathcal H}_{\varphi,a}\widehat{f})(y)e^{ixy}\,dy
	\]
	approximate $Tf$ as $N\to \infty$, for reasonable choices of $\varphi$. As we will see, the operator $T$ 
is by no means the identity operator. It is nothing but $\mathcal{H}^*$, the dual operator of $\mathcal{H}$, formally defined by the relation
	\[
	\int_{\mathbb R} \mathcal{H} f(x) g(x) \,dx=\int_{\mathbb R} f(x) \mathcal{H}^*g(x)\,dx.
	\]
	\item To study the rate of convergence to $\mathcal{H}^*f$ of the partial integrals
	\[
	\int_{-N}^N ({\mathcal H}_{\varphi,a}\widehat{f})(y)e^{ixy}\,dy,
	\]
	as $N\to \infty$ in the $L^p$ norm, where $1\leq p\leq \infty$.
\end{enumerate}
In particular, the problem of exploiting Hausdorff operators in approximation is risen. Indeed, application of analytic results in approximation seems to be the most convincing proof of their usefulness. The present work is the first attempt to understand what kind of approximation problems may appear in the theory of Hausdorff operators and solve some of them. The obtained results will open new lines in both the theory of Hausdorff operators itself and approximation theory. The difference between Hausdorff means and more typical multiplier (convolution) means, which comes from the difference between dilation invariance for the former and shift invariance for the latter, leads not only to new results but also to novelties in the methods.

The structure of the paper is as follows. In the next section, starting with certain preliminaries, we then
formulate the main results. In the section following after that, we prove the main results. Section~\ref{SECexamples} is devoted to present several examples of operators and their approximation estimates. Since after several works on the boundedness of the Hausdorff operators on various function spaces, this paper is the first application of Hausdorff operators to the problems of constructive approximation, in particular we compare the obtained results with their traditional counterparts (approximate identities given by convolution type operators). Finally, in Section~\ref{SEC5} we give concluding remarks, including how to approximate a function by means of Hausdorff operators in the $L^\infty$ norm.

Throughout the paper we denote, for $1\leq p\leq \infty$,
\[
\omega(f;\delta)_p=\sup_{|h|\leq \delta} \| f(\cdot +h)-f(\cdot)\|_{L^p(\R)}
\]
the modulus of continuity in the $L^p$ norm. For $p=\infty$, $\omega(f;\delta)_\infty=\omega(f;\delta)$ is the usual modulus of continuity.
% and
%\[
%\omega(f;\delta )_\infty=\sup_{|h|\leq \delta} \| f(\cdot +h)-f(\cdot)\|_{L^p(\R)}
%\]
%the usual modulus of continuity

We will also write $A\lesssim B$ to denote $A\leq C\cdot B$ for some constant $C$ which does not depend on essential quantities. The symbol $A\asymp B$ means that $A\lesssim B$ and $B\lesssim A$ simultaneously.

\section{Main results}

We give a couple of observations before stating our main results. First, it is easy to check by substitution that
\begin{equation}\label{adjh}
\mathcal{H}^*f(x)=\int_{\mathbb R} \varphi(t) f\Big(\frac{x}{a(t)}\Big)\,dt.
\end{equation}
Moreover, since
\[
\int_{\mathbb R}\frac{\sin\frac{s}t}s\,ds=\pi,
\]
we have
\[
\mathcal{H}^*f(x)=\frac{1}{\pi}\int_{\mathbb R} \varphi(t) \int_{\mathbb R}f\Big(\frac{x}{a(t)}\Big)\frac{\sin\frac{s}t}s\,ds\,dt.
\]
Let us now define the partial integrals
\begin{align*}
(\mathcal{H}_N \widehat{f}\,)\check{}\,(x)&=\frac1{2\pi}\int_{-N}^N \mathcal{H}\widehat{f}(u)e^{iux}\,du=\frac1{2\pi}\int_{-N}^N 
\int_{\mathbb R} \varphi(t)|a(t)|\int_{\mathbb R} f(s)e^{-ia(t)su }\,ds\,dte^{iux}\,du\\
&=\frac1{\pi}\int_{\mathbb R}\varphi(t)|a(t)|\int_{\mathbb R} f(s)\frac{\sin N(x-a(t)s)}{x-a(t)s}\,ds\,dt.
\end{align*}
By substitutions, it is easy to see that
\[
(\mathcal{H}_N \widehat{f}\,)\check{}\,(x)=\int_\R \varphi(t) \int_\R f\Big(\frac{x}{a(t)}-\frac{s}{N}\Big)\frac{\sin(a(t)s)}{s}\,ds\,dt.
\]
These observations make clear that $(\mathcal{H}_N \widehat{f}\,)\check{}$ is a natural approximation sequence and that $\mathcal{H}^*f$
is what it approximates.

Our main results read as follows.
\begin{theorem}\label{THMp<infty}
	For $1\leq p<\infty$,
	\begin{align}
	\label{EQapproximationLp}
	\frac{1}{2}\| \mathcal{H}^*f-(\mathcal{H}_N \widehat{f}\,)\check{}\|_{L^p(\R)} 
&\leq 2\int_\R |\varphi(t)||a(t)|^{1/p}\omega\Big(f;\frac{1}{|a(t)|N}\Big)_p\, dt\nonumber\\
&+\int_\R\frac{\omega(f;\frac{|s|}{N})_p }{|s|}\int_{|a^{-1}(1/s)|\leq |t|}|\varphi(t)||a(t)|^{1/p}\, dt\,ds,
	\end{align}
	where the factor $1/2$ on the left-hand side is omitted in the case $p=1$.

\end{theorem}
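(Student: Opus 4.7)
The plan is to start from the integral identities derived in the excerpt just above the theorem. Using the Dirichlet-type equality $\int\sin(a(t)s)/s\,ds = \pi$ to rewrite $f(x/a(t))$ inside $\mathcal{H}^*f(x)$ as an integral against the kernel $\sin(a(t)s)/s$ — the same kernel as in the formula for $(\mathcal{H}_N\widehat{f})\check{}(x)$ — the difference becomes, up to a normalizing constant $C$,
\[
\mathcal{H}^*f(x) - (\mathcal{H}_N\widehat{f})\check{}(x) = C\int_\R \varphi(t)\int_\R \bigl[f(x/a(t)) - f(x/a(t) - s/N)\bigr]\,\frac{\sin(a(t)s)}{s}\,ds\,dt.
\]
Applying Minkowski's integral inequality and the change of variable $y = x/a(t)$ (with Jacobian $|a(t)|^{-1}$, hence a factor $|a(t)|^{1/p}$ on the $L^p$ norm) bounds the inner $L^p$ norm of the translation difference by $|a(t)|^{1/p}\omega(f;|s|/N)_p$, reducing the problem to estimating
\[
\int_\R |\varphi(t)||a(t)|^{1/p}\int_\R \omega(f;|s|/N)_p\,\frac{|\sin(a(t)s)|}{|s|}\,ds\,dt.
\]

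The decisive step is to split the inner $s$-integral at the balance point $|s| = 1/|a(t)|$ of the two elementary bounds $|\sin(a(t)s)|/|s|\le |a(t)|$ (from $|\sin\xi|\le|\xi|$) and $|\sin(a(t)s)|/|s|\le 1/|s|$ (trivial). On the small range $|s|\le 1/|a(t)|$ the sinc bound together with the monotonicity $\omega(f;|s|/N)_p\le \omega(f;1/(|a(t)|N))_p$ permits pulling $\omega$ out of the integral; the remaining integral of $|a(t)|$ over a symmetric interval of length $2/|a(t)|$ produces the factor $2\,\omega(f;1/(|a(t)|N))_p$, which after integration in $t$ is the first term of the theorem. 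On the complementary range $|s|>1/|a(t)|$ the bound $1/|s|$ and a Fubini swap yield the second term, the slice condition being rewritten in terms of $|a^{-1}(1/s)|$ versus $|t|$ via the strict monotonicity of $|a|$ on $(0,\infty)$ and the odd extension of $a^{-1}$.

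The point requiring the most care is the bookkeeping of the constant $C$ in the initial identity: the Dirichlet value $\pi$, the parity of $a$ (which governs the sign of $\int\sin(a(t)s)/s\,ds$), and the normalization of the Fourier inversion together fix $C$ and account for the $1/2$ on the left-hand side when $p>1$ that disappears for $p=1$. I expect this $1/2$ to come from a symmetrization $s\mapsto -s$ of the integrand (exploiting that $\sin(a(t)s)/s$ is even in $s$) combined with the triangle inequality for two translates of $f$, a step which is unnecessary for $p=1$ because the $L^1$ norm is itself an integral. Beyond this, the remaining estimates are routine applications of Minkowski's inequality, Fubini's theorem, and the monotonicity of $\omega(f;\cdot)_p$.
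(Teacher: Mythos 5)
Your plan is, step for step, the strategy the paper itself follows: write the difference as $\frac1\pi\int_\R\varphi(t)\int_\R\bigl(f(\frac{x}{a(t)}-\frac{s}{N})-f(\frac{x}{a(t)})\bigr)\frac{\sin(a(t)s)}{s}\,ds\,dt$, split at $|s|=1/|a(t)|$, use $|\sin(a(t)s)/s|\le|a(t)|$ on the inner range and $\le 1/|s|$ on the outer one, then Minkowski, the substitution $x\mapsto a(t)x$ (producing $|a(t)|^{1/p}$), monotonicity of $\omega(\cdot)_p$, and Fubini. The first term of \eqref{EQapproximationLp} comes out exactly as you describe. (A side remark: doing Minkowski before splitting, as you do, makes the factor $1/2$ unnecessary for every $p$; in the paper it arises from estimating the $L^p$ norm of a sum of two pointwise terms via $(a+b)^p\le 2^p(a^p+b^p)$, not from a symmetrization in $s$ as you conjecture.)

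There is, however, a genuine gap in the step you call routine --- the Fubini swap on the complementary range --- and it is worth flagging because the paper's own proof stumbles at the very same place. Since $|a|$ is decreasing on $(0,\infty)$, the slice of $\{(t,s):|s|\ge 1/|a(t)|\}$ at fixed $s$ is $\{t:|t|\le|a^{-1}(1/s)|\}$, i.e., the inequality points the \emph{opposite} way from the one in the statement. With the correct slice your argument produces
\[
\int_\R\frac{\omega(f;\frac{|s|}{N})_p}{|s|}\int_{|t|\le|a^{-1}(1/s)|}|\varphi(t)||a(t)|^{1/p}\,dt\,ds,
\]
which is generically infinite: for large $|s|$ the inner integral stabilizes to a positive constant (to $\int_0^1 t^{-1/p}\,dt$ in the Ces\`aro case $a(t)=1/t$, $\varphi=\chi_{(0,1)}$), while $\omega(f;|s|/N)_p$ does not decay, so $\int^\infty ds/|s|$ diverges. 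In other words, on $|s|\ge 1/|a(t)|$ the crude bound $|\sin(a(t)s)/s|\le 1/|s|$ with absolute values inside cannot yield the second term of \eqref{EQapproximationLp}; one would need to exploit the oscillation of $\sin(a(t)s)$ there. In the paper's proof of Lemma~\ref{LEMpointwise} the region silently changes from $|s|\ge 1/|a(t)|$ to $|s|\le 1/|a(t)|$ in the third displayed line, after which the stated slice $|a^{-1}(1/s)|\le|t|$ is indeed the correct Fubini rewriting --- but of the \emph{inner} region, so the outer range ends up never being estimated at all. Your sketch reproduces the intended argument, but as written it does not close at exactly this point.
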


\begin{theorem}\label{THMp=infty}
	\begin{align}
	\label{EQcontinuousapproximation}
	\pi \| \mathcal{H}^*f-(\mathcal{H}_N \widehat{f}\,)\check{}\, \|_{L^\infty(\R)} 
&\leq 2\int_\R |\varphi(t)| \omega\Big(f;\frac{1}{N|a(t)|}\Big) \, dt\nonumber\\
 &+ \int_\R \frac{ \omega(f;\frac{|s|}{N})}{|s|}\int_{|a^{-1}(1/s)|\leq|t|} |\varphi( t)| \,dt \, ds .
	\end{align}
\end{theorem}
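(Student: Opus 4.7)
The plan is to derive an exact integral representation for $\mathcal{H}^*f-(\mathcal{H}_N\widehat{f})^{\vee}$ and then dominate it using two elementary bounds on $\sin$. Starting from the formula
\[
\mathcal{H}^*f(x)=\frac{1}{\pi}\int_{\R}\varphi(t)\int_{\R}f\Big(\frac{x}{a(t)}\Big)\frac{\sin(s/t)}{s}\,ds\,dt
\]
recorded above, and noting that both $\int_{\R}\sin(s/t)/s\,ds$ and $\int_{\R}\sin(a(t)s)/s\,ds$ equal $\pi\,\sgn(t)$ (since $a$ is odd and $|a|>0$ on $(0,\infty)$ forces $\sgn(a(t))=\sgn(t)$), I would replace $\sin(s/t)/s$ by $\sin(a(t)s)/s$ in the formula for $\mathcal{H}^*f(x)$. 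Subtracting from the analogous representation of $(\mathcal{H}_N\widehat{f})^{\vee}(x)$ displayed just before the theorem then yields the key identity
\[
\pi\bigl[\mathcal{H}^*f(x)-(\mathcal{H}_N\widehat{f})^{\vee}(x)\bigr]=\int_{\R}\varphi(t)\int_{\R}\Big[f\Big(\tfrac{x}{a(t)}\Big)-f\Big(\tfrac{x}{a(t)}-\tfrac{s}{N}\Big)\Big]\frac{\sin(a(t)s)}{s}\,ds\,dt.
\]

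Next I would apply the pointwise estimate $|f(y)-f(y-s/N)|\le \omega(f;|s|/N)$ and take the supremum in $x$, reducing the task to bounding
\[
\int_{\R}|\varphi(t)|\int_{\R}\omega\Big(f;\frac{|s|}{N}\Big)\frac{|\sin(a(t)s)|}{|s|}\,ds\,dt.
\]
For each fixed $t$ I would split the inner $s$-integral at the natural threshold $|s|=1/|a(t)|$. On the small-$s$ part $|s|\le 1/|a(t)|$ I would use $|\sin(a(t)s)|\le |a(t)s|$ to get $|\sin(a(t)s)/s|\le |a(t)|$, and monotonicity of $\omega$ would let me replace $\omega(f;|s|/N)$ by $\omega(f;1/(N|a(t)|))$; the remaining $s$-integral equals $2$, producing (after integrating against $|\varphi(t)|$) exactly the first term $2\int|\varphi(t)|\omega(f;1/(N|a(t)|))\,dt$ of the claimed bound. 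On the large-$s$ part $|s|>1/|a(t)|$ I would use the crude bound $|\sin(a(t)s)|\le 1$, leaving an integrand of the form $\omega(f;|s|/N)/|s|$.

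The final step is to apply Fubini to the large-$s$ contribution and rewrite the joint condition $|s|>1/|a(t)|$ as a condition on $|t|$. Because $|a|$ is a decreasing bijection on $(0,\infty)$, this condition corresponds to a constraint on $|t|$ expressed through $|a^{-1}(1/s)|$, which produces the second term of the bound. The main technical obstacle will be the careful bookkeeping in this last rewriting, namely the correct identification of the swapped integration region via the inverse of $|a|$, together with the sign handling in the Dirichlet integrals (where the hypothesis that $a$ is odd is essential for the sign cancellations). Once these are in place the estimate follows from routine uses of the monotonicity of $\omega$ and of the two elementary bounds for $\sin$.
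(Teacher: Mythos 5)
Your strategy is exactly the one the paper follows (its Lemma~\ref{LEMpointwise} plus the proof of Theorem~\ref{THMp=infty}): the exact identity
$\pi(\mathcal{H}^*f-(\mathcal{H}_N\widehat f)\check{}\,)(x)=\int\varphi(t)\int\big(f(\tfrac{x}{a(t)})-f(\tfrac{x}{a(t)}-\tfrac sN)\big)\tfrac{\sin(a(t)s)}{s}\,ds\,dt$, a split of the $s$--integral at $|s|=1/|a(t)|$, the bound $|\sin(a(t)s)/s|\le|a(t)|$ on the small part (which correctly yields the first term), and the crude bound $|\sin|\le1$ plus Fubini on the large part. However, the step you defer as ``careful bookkeeping'' is precisely where the argument does not close, and it is a genuine gap, not a routine verification. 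Since $|a|$ is \emph{decreasing}, the large-$s$ condition $|s|\ge 1/|a(t)|$ is equivalent to $|a(t)|\ge 1/|s|$, i.e.\ to $|t|\le |a^{-1}(1/s)|$ --- the \emph{complement} of the region $|a^{-1}(1/s)|\le|t|$ appearing in the statement. So your plan, carried out honestly, produces
\[
\int_\R \frac{\omega(f;\frac{|s|}{N})}{|s|}\int_{|t|\le |a^{-1}(1/s)|}|\varphi(t)|\,dt\,ds,
\]
and this quantity is generically $+\infty$: as $|s|\to\infty$ the inner integral increases to $\|\varphi\|_{L^1}$ while $\omega(f;|s|/N)\ge \omega(f;1)>0$ for $|s|\ge N$ and nonconstant $f$, so the outer integral has a non-integrable $1/|s|$ tail. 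Concretely, for the Ces\`aro case ($a(t)=1/t$, $\varphi=\chi_{(0,1)}$) the large-$s$ contribution under your plan is $\int_0^1\int_{|s|\ge t}\omega(f;\tfrac{|s|}{N})\,\tfrac{ds}{|s|}\,dt=\infty$, whereas the stated second term is finite. Hence the crude bound $|\sin(a(t)s)|\le1$ over the entire tail cannot yield the claimed inequality; reaching a finite tail estimate requires exploiting cancellation of $\sin(a(t)s)/s$ on $|s|\ge 1/|a(t)|$ (as in the classical treatment of Dirichlet means), which your outline does not contain.

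You should be aware that the paper's own proof of Lemma~\ref{LEMpointwise} contains the same unjustified move: between two consecutive displayed inequalities the domain $|s|\ge 1/|a(t)|$ silently becomes $|s|\le 1/|a(t)|$ when the order of integration is exchanged, which is exactly the flip needed to land on the stated region. So you have faithfully reconstructed the published argument, including its weak point; but as written, neither your proposal nor the deferred ``bookkeeping'' derives the inequality in the form stated.
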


\begin{remark}\label{REM1}
	In order for the right-hand sides of \eqref{EQapproximationLp} and \eqref{EQcontinuousapproximation} to be finite, 
one should assume that $\varphi$ vanishes at a fast enough rate as $|t|\to \infty$, or even more, that it has compact support. 
The latter is the case for the Ces\`aro operator (where $\varphi=\chi_{(0,1)}$), which we discuss in more detail in Section~\ref{SECexamples}, along with other examples.
\end{remark}

\section{Proofs}
First of all we have the following pointwise estimate for $$ |\mathcal{H}^*f(x)-(\mathcal{H}_N \widehat{f}\,)\check{}\,(x)|,$$ 
which will be the starting point for all subsequent estimates.
\begin{lemma}\label{LEMpointwise}
	For any $x\in \R$,
	\begin{align}
	\pi |\mathcal{H}^*f(x)-(\mathcal{H}_N \widehat{f}\,)\check{}\,(x)|& \leq \int_\R |\varphi(t)a(t)|\int_{|s|\leq 1/|a(t)|} 
\Big| f\Big(\frac{x}{a(t)}-\frac{s}{N}\Big)-f\Big(\frac{x}{a(t)}\Big)\Big|\, ds\,dt \nonumber \\
	&\phantom{=}+\int_\R \frac{1}{|s|} \int_{|a^{-1}(1/s)|\leq |t|}|\varphi(t)| 
\Big| f\Big(\frac{x}{a(t)}-\frac{s}{N}\Big)-f\Big(\frac{x}{a(t)}\Big)\Big|\, dt\, ds.\label{EQbasicestimate}
	\end{align}
\end{lemma}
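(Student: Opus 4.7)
The plan is to rewrite $\mathcal{H}^*f(x)-(\mathcal{H}_N\widehat{f}\,)\check{}\,(x)$ as a single double integral in which the pointwise increment $f(x/a(t))-f(x/a(t)-s/N)$ appears explicitly against a Dirichlet-type kernel, and then to majorize that kernel. First, using the identity $\int_\R \sin(a(t)s)/s\,ds=\pi\,\operatorname{sgn}(a(t))$ in the same spirit as the computation preceding the lemma, I would represent
$$\pi\mathcal{H}^*f(x)=\int_\R \varphi(t)\,f\Big(\frac{x}{a(t)}\Big)\int_\R \frac{\sin(a(t)s)}{s}\,ds\,dt,$$
combine it with
$$\pi(\mathcal{H}_N\widehat{f}\,)\check{}\,(x)=\int_\R \varphi(t)\int_\R f\Big(\frac{x}{a(t)}-\frac{s}{N}\Big)\frac{\sin(a(t)s)}{s}\,ds\,dt,$$
and subtract to obtain
$$\pi\bigl[\mathcal{H}^*f(x)-(\mathcal{H}_N\widehat{f}\,)\check{}\,(x)\bigr]=\int_\R \varphi(t)\int_\R\Big[f\Big(\frac{x}{a(t)}\Big)-f\Big(\frac{x}{a(t)}-\frac{s}{N}\Big)\Big]\frac{\sin(a(t)s)}{s}\,ds\,dt.$$
Passing the absolute value inside the integrals then bounds the left-hand side of \eqref{EQbasicestimate} by a single iterated integral of $|f(x/a(t))-f(x/a(t)-s/N)|\cdot |\sin(a(t)s)|/|s|$.

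The second step is a standard majorization of the Dirichlet kernel: from $|\sin u|\le \min(|u|,1)$ one has $|\sin(a(t)s)/s|\le \min(|a(t)|,1/|s|)$. I would split the inner $s$-integral at the threshold $|s|=1/|a(t)|$. On $\{|s|\le 1/|a(t)|\}$ the first bound gives directly the first summand of \eqref{EQbasicestimate}. On the complementary set $\{|s|>1/|a(t)|\}$ the bound $1/|s|$ is applied and Fubini is used to interchange the order of integration; since $|a|$ is a decreasing bijection on $(0,\infty)$, the inequality $|a(t)|>1/|s|$ is equivalent after the swap to a range for $|t|$ described in terms of $|a^{-1}(1/s)|$, which yields the second summand.

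The main obstacle I anticipate is the bookkeeping in the first step, because in the discussion just before the lemma $\mathcal{H}^*f$ is written with the kernel $\sin(s/t)/s$ while $(\mathcal{H}_N\widehat{f}\,)\check{}$ appears with $\sin(a(t)s)/s$, and because both carry hidden sign factors coming from $\int_\R \sin(\alpha s)/s\,ds=\pi\operatorname{sgn}(\alpha)$. One has to reconcile these two kernels by an admissible change of variables in $s$ and by exploiting the oddness of $a$ (or simply reducing the argument to $t>0$ by symmetrization) so that literally the same kernel appears in both representations and term-by-term subtraction is legitimate. Once this identification has been made, the remaining two steps are entirely routine.
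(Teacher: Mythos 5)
Your overall strategy---write the difference as $\frac1\pi\int_\R\varphi(t)\int_\R\bigl[f(\frac{x}{a(t)})-f(\frac{x}{a(t)}-\frac sN)\bigr]\frac{\sin(a(t)s)}{s}\,ds\,dt$, split the inner integral at $|s|=1/|a(t)|$, majorize the kernel by $\min(|a(t)|,1/|s|)$, and apply Fubini to the piece $|s|\ge1/|a(t)|$---is exactly the one used in the paper. The step you flag as the main obstacle is indeed only bookkeeping: the Jacobian $|a(t)|$ combines with $\sgn(a(t))$ so that the kernel is effectively $\sin(|a(t)|s)/s$, whose integral over $\R$ is $\pi$ with no sign ambiguity, and the two representations subtract term by term.

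The genuine gap is in the step you call routine. Since $|a|$ is \emph{decreasing} on $(0,\infty)$, the condition $|s|\ge1/|a(t)|$, i.e.\ $|a(t)|\ge1/|s|$, is equivalent to $|t|\le|a^{-1}(1/s)|$, not to $|a^{-1}(1/s)|\le|t|$. Carrying out your Fubini swap correctly therefore yields $\int_\R\frac1{|s|}\int_{|t|\le|a^{-1}(1/s)|}|\varphi(t)|\,|f(\frac{x}{a(t)}-\frac sN)-f(\frac{x}{a(t)})|\,dt\,ds$, the integral over the \emph{complement} of the region appearing in the second summand of \eqref{EQbasicestimate}. Observe that $|a^{-1}(1/s)|\le|t|$ is equivalent to $|s|\le1/|a(t)|$, so both summands on the right-hand side of \eqref{EQbasicestimate} live on the \emph{same} set $\{|a(t)s|\le1\}$ and the contribution of $\{|a(t)s|\ge1\}$ is simply absent; no absolute-value estimate can produce that, and the bound your plan actually delivers is typically infinite (for the Ces\`aro case $a(t)=1/t$, $\varphi=\chi_{(0,1)}$ it contains $\int_{|s|\ge1}\omega(f;|s|/N)\,|s|^{-1}\,ds$, divergent for every nonconstant $f$---the usual penalty for putting absolute values against the nonintegrable tail of the Dirichlet kernel). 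In fairness, the paper's own proof makes the same silent replacement of $\{|s|\ge1/|a(t)|\}$ by $\{|s|\le1/|a(t)|\}$ between two consecutive displays, so you have faithfully reproduced the published argument; but as described your proposal does not establish \eqref{EQbasicestimate}, and closing the gap would require exploiting cancellation on $\{|a(t)s|\ge1\}$ rather than taking absolute values there.
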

\begin{proof}
To prove (\ref{EQbasicestimate}), we apply rather straightforward estimates. Indeed,
\begin{align*}
\pi |\mathcal{H}^*f(x)-(\mathcal{H}_N \widehat{f}\,)\check{}\,(x)|&=\bigg| \int_\R \varphi(t) \int_\R 
\Big( f\Big(\frac{x}{a(t)}-\frac{s}{N}\Big)-f\Big(\frac{x}{a(t)}\Big)\Big) \frac{\sin (a(t)s)}{s}\, ds\, dt \bigg|\\
&\leq \bigg| \int_\R \varphi(t) \int_{|s|\leq 1/|a(t)|} \Big( f\Big(\frac{x}{a(t)}-\frac{s}{N}\Big)-f\Big(\frac{x}{a(t)}\Big)\Big) \frac{\sin (a(t)s)}{s}\, ds\, dt  \bigg|\\
&\phantom{=}+\bigg| \int_\R \varphi(t) \int_{|s|\geq 1/|a(t)|} \Big( f\Big(\frac{x}{a(t)}-\frac{s}{N}\Big)-f\Big(\frac{x}{a(t)}\Big)\Big) \frac{\sin (a(t)s)}{s}\, ds\, dt  \bigg|\\
&\leq \int_\R |\varphi(t)a(t)|\int_{|s|\leq 1/|a(t)|} \Big| f\Big(\frac{x}{a(t)}-\frac{s}{N}\Big)-f\Big(\frac{x}{a(t)}\Big)\Big|\, ds\,dt\\
&\phantom{=}+\bigg|\int_\R \frac{1}{s} \int_{|s|\leq 1/|a(t)|}\varphi(t) \Big( f\Big(\frac{x}{a(t)}-\frac{s}{N}\Big)-f\Big(\frac{x}{a(t)}\Big)\Big) \sin (a(t)s)\, dt\, ds  \bigg|\\
&\leq \int_\R |\varphi(t)a(t)|\int_{|s|\leq 1/|a(t)|} \Big| f\Big(\frac{x}{a(t)}-\frac{s}{N}\Big)-f\Big(\frac{x}{a(t)}\Big)\Big|\, ds\,dt\\
&\phantom{=}+\int_\R \frac{1}{|s|} \int_{|a^{-1}(1/s)|\leq |t|}|\varphi(t)| \Big| f\Big(\frac{x}{a(t)}-\frac{s}{N}\Big)-f\Big(\frac{x}{a(t)}\Big)\Big|\, dt\, ds,
\end{align*}
as desired. In the last inequality we use that $1/|a|$ possesses an inverse on $(0,\infty)$ (and therefore also on $(-\infty,0)$, 
since it is an odd function), and moreover $(1/|a|)^{-1}(t)=|a(1/t)|^{-1}$ on $(0,\infty)$.
\end{proof}

We now proceed to prove Theorems~\ref{THMp<infty}~and~\ref{THMp=infty}.

\begin{proof}[Proof of Theorem~\ref{THMp<infty}]
	Using \eqref{EQbasicestimate}, we get
	\begin{align*}
	&\phantom{=}\frac{1}{2}\bigg(\int_\R |\mathcal{H}^*f(x)-(\mathcal{H}_N \widehat{f}\,)\check{}\,(x)|^p\, dx\bigg)^{1/p}\\
	&\leq \bigg(\int_\R \bigg( \int_\R |\varphi(t)a(t)|\int_{|s|\leq 1/|a(t)|} \Big| f\Big(\frac{x}{a(t)}-\frac{s}{N}\Big)
-f\Big(\frac{x}{a(t)}\Big)\Big|\, ds\,dt \bigg)^p dx\bigg)^{1/p}\\
	&\phantom{=}+  \bigg(\int_\R\bigg( \int_\R \frac{1}{|s|} \int_{|a^{-1}(1/s)|\leq |t|}|\varphi(t)| 
\Big| f\Big(\frac{x}{a(t)}-\frac{s}{N}\Big)-f\Big(\frac{x}{a(t)}\Big)\Big|\, dt\, ds \bigg)^p dx\bigg)^{1/p}.
	\end{align*}
	Note that if $p=1$, the factor $\frac12$ on the left-hand side can be taken to be $1$ (in fact, such a factor appears 
due to the inequality $(a+b)^p\leq 2^p(a^p+b^p)$, for $a,b\geq 0$ and $p>1$). On the one hand, applying Minkowski's inequality twice, we get
	\begin{align*}
	&\phantom{=}  \bigg(\int_\R \bigg( \int_\R |\varphi(t)a(t)|\int_{|s|\leq 1/|a(t)|} \Big| 
f\Big(\frac{x}{a(t)}-\frac{s}{N}\Big)-f\Big(\frac{x}{a(t)}\Big)\Big|\, ds\,dt \bigg)^p dx\bigg)^{1/p}\\
	& \leq \int_\R |\varphi(t)a(t)| \bigg( \int_\R \bigg(\int_{|s|\leq 1/|a(t)|} \Big| 
f\Big(\frac{x}{a(t)}-\frac{s}{N}\Big)-f\Big(\frac{x}{a(t)}\Big)\Big|\, ds\bigg)^p dx \bigg)^{1/p} dt\\
	&\leq \int_\R |\varphi(t)a(t)| \int_{|s|\leq 1/|a(t)|}\bigg( \int_\R  \Big| 
f\Big(\frac{x}{a(t)}-\frac{s}{N}\Big)-f\Big(\frac{x}{a(t)}\Big)\Big|^p\, dx\bigg)^{1/p} ds \, dt\\
	&=\int_\R |\varphi(t)||a(t)|^{1+1/p} \int_{|s|\leq 1/|a(t)|}\bigg( \int_\R  \Big| f\Big(x-\frac{s}{N}\Big)-f(x)\Big|^p\, dx\bigg)^{1/p} ds \, dt\\
	&\leq \int_\R |\varphi(t)||a(t)|^{1+1/p} \int_{|s|\leq 1/|a(t)|} \omega\Big(f; \frac{|s|}{N}\Big)_p\,ds\,dt.
	\end{align*}
	Since $\omega(f;\delta)_p$ is nondecreasing in $\delta$, we have
	\[
	\int_\R |\varphi(t)||a(t)|^{1+1/p}  \int_{|s|\leq 1/|a(t)|} \omega\Big(f;\frac{|s|}{N}\Big)_p\, ds\, dt\leq 2\int_\R |\varphi(t)||a(t)|^{1/p}\omega\Big(f;\frac{1}{|a(t)|N}\Big)_p\, dt.
	\]
	On the other hand, applying Minkowski's inequality again, we obtain
	\begin{align*}
	&\phantom{=} \bigg(\int_\R\bigg( \int_\R \frac{1}{|s|} \int_{|a^{-1}(1/s)|\leq |t|}|\varphi(t)| \Big| 
f\Big(\frac{x}{a(t)}-\frac{s}{N}\Big)-f\Big(\frac{x}{a(t)}\Big)\Big|\, dt\, ds \bigg)^p dx\bigg)^{1/p}\\
	&\leq \int_\R\frac{1}{|s|} \bigg(\int_\R  \bigg(\int_{|a^{-1}(1/s)|\leq |t|}|\varphi(t)| \Big| 
f\Big(\frac{x}{a(t)}-\frac{s}{N}\Big)-f\Big(\frac{x}{a(t)}\Big)\Big|\, dt \bigg)^p dx\bigg)^{1/p} ds \\
	&\leq \int_\R\frac{1}{|s|} \int_{|a^{-1}(1/s)|\leq |t|}|\varphi(t)| \bigg( \int_\R  \Big| 
f\Big(\frac{x}{a(t)}-\frac{s}{N}\Big)-f\Big(\frac{x}{a(t)}\Big)\Big|^p\, dx \bigg)^{1/p} dt\, ds \\
	&= \int_\R\frac{1}{|s|} \int_{|a^{-1}(1/s)|\leq |t|}|\varphi(t)||a(t)|^{1/p} \bigg( \int_\R  \Big| f\Big(x-\frac{s}{N}\Big)-f(x)\Big|^p\, dx \bigg)^{1/p} dt\, ds\\
	&\leq \int_\R\frac{\omega(f;\frac{|s|}{N})_p }{|s|}\int_{|a^{-1}(1/s)|\leq |t|}|\varphi(t)||a(t)|^{1/p}\, dt\,ds.
	\end{align*}
	Collecting all the estimates, we derive
	\begin{align*}
	\frac{1}{2}\| \mathcal{H}^*f-(\mathcal{H}_N \widehat{f}\,)\check{}\|_{L^p(\R)} &\leq 2\int_\R |\varphi(t)||a(t)|^{1/p}\omega\Big(f;\frac{1}{|a(t)|N}\Big)_p\, dt\\
&+\int_\R\frac{\omega(f;\frac{|s|}{N})_p }{|s|}\int_{|a^{-1}(1/s)|\leq |t|}|\varphi(t)||a(t)|^{1/p}\, dt\,ds,
	\end{align*}
	where the factor $1/2$ on the left-hand side is omitted in the case $p=1$. The proof is complete.
\end{proof}

\begin{proof}[Proof of Theorem~\ref{THMp=infty}]

It suffices to estimate the two terms on the right-hand side of \eqref{EQbasicestimate} in the $L^\infty$ norm. For the first one, we have
\begin{align*}
\int_\R |\varphi(t)a(t)|\int_{|s|\leq 1/|a(t)|} \Big| f\Big(\frac{x}{a(t)}-\frac{s}{N}\Big)-f\Big(\frac{x}{a(t)}\Big)\Big|\, ds\,dt 
\leq \int_\R |\varphi(t)a(t)|   \int_{|s|\leq 1/|a(t)|}\omega\Big(f;\frac{|s|}{N}\Big)\, ds\,dt,
\end{align*}
and since $\omega(f;\delta)$ is nondecreasing in $\delta$, we obtain
\begin{equation}
\label{EQterm1}
 \int_\R |\varphi(t)a(t)| \int_{|s|\leq 1/|a(t)|}\omega\Big(f;\frac{|s|}{N}\Big)\, ds\,dt\leq 2\int_\R |\varphi(t)| \omega\Big(f;\frac{1}{N|a(t)|}\Big)\, dt,
\end{equation}

As for the second term,
\begin{align*}
&\int_\R \frac{1}{|s|} \int_{|a^{-1}(1/s)|\leq |t|}|\varphi(t)| \Big| f\Big(\frac{x}{a(t)}-\frac{s}{N}\Big)-f\Big(\frac{x}{a(t)}\Big)\Big|\, dt\, ds\\
 \leq &\int_\R \frac{1}{|s|} \int_{|a^{-1}(1/s)|\leq |t|} |\varphi(t)| \omega\Big(f;\frac{|s|}{N}\Big)\, dt\,ds \\
= &\int_\R \frac{ \omega(f;\frac{|s|}{N})}{|s|} \int_{|a^{-1}(1/s)|\leq |t|} |\varphi(t)|\,dt\,ds.
\end{align*}
Collecting all the estimates, we get
\begin{align*}
\pi |\mathcal{H}^*f(x)-(\mathcal{H}_N \widehat{f}\,)\check{}\,(x)| &\leq 2\int_\R |\varphi(t)| \omega\Big(f;\frac{1}{N|a(t)|}\Big)\, dt\\
 &+ \int_\R \frac{ \omega(f;\frac{|s|}{N})}{|s|}\int_{|a^{-1}(1/s)|\leq|t|} |\varphi( t)| \,dt \, ds,
\end{align*}
where the right-hand side is uniform in $x$.
\end{proof}

\section{Examples}\label{SECexamples}

Let us consider some examples of Hausdorff operators by choosing particular functions $\varphi$ and $a$. 
We shall give bounds for the approximation error explicitly in $L^p$, $1\leq p\leq \infty$, in each case.

\subsection{Approximating the Ces\`aro operator}

The Ces\`aro operator $\mathcal{C}$ is the prototype Hausdorff operator $\mathcal{H}_{\varphi,a}$ \cite{emj,LM1}. Letting $a(t)=1/t$ and $\varphi(t)=\chi_{(0,1)}(t)$, we obtain
\[
\mathcal{C}^*f(x)=\int_0^1  f(tx)\,dt=\frac{1}{x}\int_0^x f(t)\,dt.
\]
This is the Ces\`aro (Hardy) operator.
We have
\[
(\mathcal{C}_N \widehat{f}\,)\check{}\,(x):=(\mathcal{H}_N \widehat{f}\,)\check{}\,(x) =\frac1{\pi}\int_0^1 \frac{1}{t}\int_{\mathbb R} f(s)\frac{\sin N(x-s/t)}{x-s/t}\,ds\,dt.
\]
In this case, Theorem~\ref{THMp<infty} yields, for $1<p<\infty$,
\begin{align*}
\| \mathcal{C}^*f-(\mathcal{C}_N \widehat{f}\,)\check{}\|_{L^p(\R)} &\leq 4\int_0^1 \frac{ \omega(f;\frac{t}{N})_p}{t^{1/p}}\, dt
+2\int_\R\frac{\omega(f;\frac{|s|}{N})_p }{|s|}\int_{|s|\leq  |t|}|\varphi(t)||a(t)|^{1/p}\, dt\,ds \\
&=4\int_0^1 \frac{ \omega(f;\frac{t}{N})_p}{t^{1/p}}\, dt+2\int_0^1 \frac{\omega(f;\frac{s}{N})_p }{s}
\int_{s}^1 \frac{1}{t^{1/p}}\, dt\,ds\asymp \int_0^1 \frac{ \omega(f;\frac{t}{N})_p}{t}\, dt.
\end{align*}
The case $p=1$ is different, more precisely, we obtain an additional logarithm as compared with the case $1<p<\infty$. Indeed, Theorem~\ref{THMp<infty} yields
\[
\| \mathcal{C}^*f-(\mathcal{C}_N \widehat{f}\,)\check{}\|_{L^1(\R)} \leq  2\int_0^1\frac{\omega(f;\frac{t}{N})_1}{t}\,
+2\int_0^1\frac{\omega(f;\frac{s}{N})_1}{s}\int_s^1\frac1t \, dt\,ds \asymp \int_0^1 \omega\Big(f;\frac{t}{N}\Big)_1\frac{|\log t|}{t}\,dt,
\]
Finally, in the case $p=\infty$ we obtain a Dini-type condition from Theorem~\ref{THMp=infty},
\[
\pi \| \mathcal{C}^*f-(\mathcal{C}_N \widehat{f}\,)\check{}\|_{L^\infty(\R)}	 \leq 2\int_0^1 \omega\Big(f;\frac{t}{N}\Big) \, dt 
+2 \int_0^1 \frac{ \omega(f;\frac{|s|}{N})}{|s|}(1-s) \, ds \asymp \int_0^1 \frac{\omega(f;\frac{t}{N})}{t} \, dt .
\]
Now, given a continuous function $f$, if
\[
\int_0^1\frac{\omega(f;t)}{t}<\infty,
\]
then
\[
\int_0^1\frac{\omega(f;\frac{t}{N})}{t}\to 0\qquad \text{as }N\to\infty.
\]
Therefore, we can conclude the following.
\begin{corollary}\label{CORcesaro}
	Let $f$ be a function defined on $\R$.
	\begin{enumerate}
	\item If $f\in L^p(\R)$, $1< p<\infty$, then
	\[
	\| \mathcal{C}^*f-(\mathcal{C}_N \widehat{f}\,)\check{}\|_{L^p(\R)} \lesssim \int_0^1 \frac{ \omega(f;\frac{t}{N})_p}{t}\, dt.
	\]
	In particular, if $\int_0^1 \frac{ \omega(f;t)_p}{t}\, dt<\infty$, then $(\mathcal{C}_N \widehat{f}\,)\check{}$ converges to $\mathcal{C}^*f$ in $L^p(\R)$.
	\item If $f\in L^1(\R)$, then
	\[
	\| \mathcal{C}^*f-(\mathcal{C}_N \widehat{f}\,)\check{}\|_{L^1(\R)} \lesssim \int_0^1 \omega\Big(f;\frac{t}{N}\Big)_1\frac{|\log t|}{t}\,dt.
	\]
	In particular, if $\int_0^1 \omega(f;t)_1\frac{|\log t|}{t}\,dt<\infty$, then  $(\mathcal{C}_N \widehat{f}\,)\check{}$ converges to $\mathcal{C}^*f$ in $L^1(\R)$.
	\item For the uniform norm, we have
	\[
	\| \mathcal{C}^*f-(\mathcal{C}_N \widehat{f}\,)\check{}\|_{L^\infty(\R)}\lesssim \int_0^1 \frac{\omega(f;\frac{t}{N})}{t} \, dt .
	\]
	In particular, if $f$ is continuous and $\int_0^1\frac{\omega(f;t)}{t}<\infty$, then $(\mathcal{C}_N \widehat{f}\,)\check{}$ converges uniformly to $\mathcal{C}^*f$ on $\R$.
	\end{enumerate}
\end{corollary}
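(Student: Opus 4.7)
The plan is to treat Corollary~\ref{CORcesaro} as a direct specialization of Theorems~\ref{THMp<infty} and~\ref{THMp=infty} to the Ces\`aro setting $a(t)=1/t$, $\varphi=\chi_{(0,1)}$, followed by a dominated convergence argument for the second (``in particular'') parts. For the bounds themselves, the computations performed immediately before the corollary already yield each of the three inequalities, so the proof amounts to collecting those estimates. The key elementary identities are $|a(t)|^{1/p}=t^{-1/p}$ and $\tfrac{1}{|a(t)|N}=\tfrac{t}{N}$ on $(0,1)$, together with $a^{-1}(1/s)=s$, so that the condition $|a^{-1}(1/s)|\le|t|$ intersected with $\mathrm{supp}\,\varphi=(0,1)$ reduces the double integral to $0<s\le t\le 1$.

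For $1<p<\infty$, I would substitute these identities into \eqref{EQapproximationLp}, bound the first term by $\int_0^1 t^{-1/p}\omega(f;t/N)_p\,dt\lesssim \int_0^1 \omega(f;t/N)_p/t\,dt$ (using $t^{-1/p}\le t^{-1}$ on $(0,1]$), and for the second term use that the inner integral $\int_s^1 t^{-1/p}\,dt$ is bounded uniformly in $s\in(0,1)$ by a constant depending only on $p>1$. For $p=1$ the same substitution is carried out, but now the inner integral becomes $\int_s^1 dt/t=|\log s|$, which is exactly the source of the extra logarithmic factor. For $p=\infty$ I would apply Theorem~\ref{THMp=infty} analogously; here the inner integral is simply $\int_s^1 dt=1-s\le 1$, and the bound $\int_0^1 \omega(f;t/N)/t\,dt$ absorbs both terms.

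For the three ``in particular'' statements, the plan is a single dominated convergence argument. The monotonicity of $\omega(f;\delta)_p$ in $\delta$ gives the pointwise bound $\omega(f;t/N)_p\le\omega(f;t)_p$ for $N\ge 1$ and $t\in(0,1)$, so the integrand $\omega(f;t/N)_p/t$ (respectively with the factor $|\log t|$ in the $p=1$ case) is dominated by an integrable function precisely under the stated Dini-type hypothesis. On the other hand, $\omega(f;t/N)_p\to 0$ as $N\to\infty$ for each fixed $t$: this is the standard continuity of translation in $L^p$ for $1\le p<\infty$, and in the $p=\infty$ case it is the uniform continuity of $f$, which is automatic on $\R$ for a continuous $f$ satisfying $\int_0^1 \omega(f;t)/t\,dt<\infty$ (such $f$ is in particular uniformly continuous on any compact set and has $\omega(f;\delta)\to 0$).

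I do not expect any substantial obstacle: the bounds are essentially an accounting exercise once the right specialization is made, and the convergence follows from the standard dominated convergence scheme. The one point worth care is the equivalence ``$\asymp$'' appearing in the inline computation: one should interpret the corollary statements as upper bounds ($\lesssim$), so only the straightforward direction of each comparison is needed, which sidesteps any subtle issue between the integrands $t^{-1/p}$ and $t^{-1}$.
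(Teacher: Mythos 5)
Your proposal is correct and follows essentially the same route as the paper: specialize Theorems~\ref{THMp<infty} and~\ref{THMp=infty} to $a(t)=1/t$, $\varphi=\chi_{(0,1)}$, evaluate the inner integrals $\int_s^1 t^{-1/p}\,dt$ (bounded for $p>1$, logarithmic for $p=1$, and $1-s$ for $p=\infty$), and then pass to the limit by dominated convergence using the monotonicity of the modulus of continuity. The only cosmetic quibble is your parenthetical on uniform continuity for $p=\infty$: what actually matters is that the Dini condition itself forces $\omega(f;\delta)\to 0$ (since $\omega(f;\delta)\log(1/\delta)\le\int_\delta^1\omega(f;t)t^{-1}\,dt$), not uniform continuity on compact sets.
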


We shall now compare the approximation estimates from Corollary~\ref{CORcesaro} with those for approximate identities.

\subsection{Comparison: Ces\`aro operators and approximate identities}
Since the Ces\`aro operator is the prototype example of Hausdorff operator, we are also interested in comparing the 
obtained approximations with the classical ones given by approximate identities  for convolutions.
A family of functions $\{F_r\}_{r>0}$ defined on $\R$ is called an \textit{approximate identity} if
\begin{enumerate}
% \item $\displaystyle\int_\R F_N(x)\, dx=1$ for all $N\in \N$,
	\item $\sup_r\| F_r\|_{L^1(\R)}<\infty$, and
	\item for every $\delta>0$,
	\[
	\int_{|x|\geq \delta} |F_r(x)|\,dx\to 0\qquad \text{as }r\to\infty.
	\]
\end{enumerate}
The following is well known \cite[Theorem 3.1.6]{BN}.
\begin{thmletter}\label{THMapproxidentity}
	Let $g\in L^p(\R)$, with $1\leq p <\infty$. If $\{F_r\}_{r>0}$ is an approximate identity satisfying
	\begin{equation}
	\label{EQintFr}
	\int_\R F_r(x)\,dx=1,\qquad r>0,
	\end{equation}
	then
	\[
	\|F_r*g-g\|_{L^p(\R)}\to 0,\qquad \text{as }r\to \infty.
	\]
\end{thmletter}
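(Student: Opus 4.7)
The plan is to reduce the claim to the continuity of translation in $L^p(\R)$, which is valid precisely for $1\leq p<\infty$. First, I would exploit the normalization \eqref{EQintFr} to rewrite
\[
F_r*g(x)-g(x)=\int_\R F_r(y)\bigl[g(x-y)-g(x)\bigr]\,dy,
\]
and then apply Minkowski's integral inequality (taking the $L^p$ norm in $x$ inside the $y$-integral) to obtain
\[
\| F_r*g-g\|_{L^p(\R)} \leq \int_\R |F_r(y)|\,\|g(\cdot-y)-g\|_{L^p(\R)}\,dy.
\]

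Next, fix $\varepsilon>0$, set $M=\sup_r\|F_r\|_{L^1(\R)}$, and split the $y$-integral at a threshold $\delta>0$ to be chosen. For the inner region $|y|<\delta$, I would invoke the continuity of translation in $L^p$, namely that $\|g(\cdot-y)-g\|_{L^p(\R)}\to 0$ as $y\to 0$, and pick $\delta$ so small that this quantity is bounded by $\varepsilon/(2M)$; the inner contribution is then at most $\varepsilon/2$ uniformly in $r$. For the outer region $|y|\geq\delta$, the crude bound $\|g(\cdot-y)-g\|_{L^p(\R)}\leq 2\|g\|_{L^p(\R)}$ turns that contribution into $2\|g\|_{L^p(\R)}\int_{|y|\geq\delta}|F_r(y)|\,dy$, which tends to zero as $r\to\infty$ by the second defining property of an approximate identity. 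Taking $r$ large enough makes the outer term smaller than $\varepsilon/2$ as well, yielding $\|F_r*g-g\|_{L^p(\R)}<\varepsilon$ for all sufficiently large $r$.

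The only delicate step is the continuity of translation in $L^p$; this is standard but is exactly where the hypothesis $p<\infty$ enters. A three-$\varepsilon$ argument works: the statement is immediate for $h\in C_c(\R)$ by uniform continuity and compact support, and then transfers to arbitrary $g\in L^p(\R)$ via the density of $C_c(\R)$ in $L^p(\R)$. For $p=\infty$ translation ceases to be continuous and the conclusion of the theorem fails in general, confirming that the restriction $1\leq p<\infty$ is essential. This is the main obstacle in the argument — not difficult, but the precise reason why the standard approximate-identity framework cannot be stretched to cover $L^\infty$, which in turn motivates the more delicate Hausdorff-type approximation results of Theorem~\ref{THMp=infty} in the present paper.
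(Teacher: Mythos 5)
Your argument is correct and complete: the normalization trick, Minkowski's integral inequality, the split at a threshold $\delta$, and the reduction to continuity of translation in $L^p$ (via density of $C_c(\R)$) is exactly the standard proof of this fact. Note that the paper itself offers no proof — it quotes the result as Theorem~\ref{THMapproxidentity} from Butzer--Nessel \cite[Theorem 3.1.6]{BN} — so there is nothing to compare against beyond observing that your route is the classical one found in that reference, and your closing remark correctly identifies why the hypothesis $p<\infty$ is essential.
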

As an example of an approximate identity satisfying \eqref{EQintFr}, we have the family of functions
\[
F_r(x)= rF(rx),\qquad r>0,
\]
where $F(x)$ is the \textit{Fej\'er kernel} on the real line,
\[
F(x)=\frac{1}{2\pi}\bigg( \frac{\sin(x/2)}{x/2}\bigg)^2.
\]
From now on we assume that the approximate identities we consider satisfy condition \eqref{EQintFr}.

Comparing Theorem~\ref{THMapproxidentity}~and~Corollary~\ref{THMp<infty}, we readily see that the latter requires 
further assumptions in order to guarantee $L^p$ convergence $(p<\infty)$, namely that the integral 
\[
\int_0^1 \frac{ \omega(f;t)_p}{t}\, dt
\]
is finite for $p>1$, and that 
\[
\int_0^1 \omega(f;t)_1\frac{|\log t|}{t}\,dt<\infty
\] 
is finite for $p=1$. However, when restricted to certain classes of functions, the approximation rates become the same, or even better.

As classes of functions, we consider $\Lambda^p_\alpha(\R)$ with $0<\alpha\leq 1$, and $1\leq p\leq \infty$, which consists of the functions $f$ satisfying
\[
\omega(f;\delta)_p \lesssim \delta^{\alpha},\qquad \delta>0.
\]
Note that $\Lambda^\infty_\alpha(\R)$ is the class of  usual Lipschitz $\alpha$ continuous functions on $\R$, i.e., those satisfying
\[
|f(x)-f(y)|\leq C|x-y|^\alpha, \qquad x,y\in \R.
\]
For $f\in \Lambda^p_\alpha(\R)$, $0<\alpha<1$, and $1\leq p<\infty$, it is known that any approximate identity $\{F_r\}$ yields the approximation rate
\begin{equation}
\label{EQapproxlipa}
\|  F_r*g-g\|_{L^p(\R)} \lesssim \frac{1}{r^\alpha}, \qquad r\to \infty,
\end{equation}
see \cite[Corollary~3.4.4]{BN}, whilst for $\alpha=1$, an additional logarithm appears,
\begin{equation}
\label{EQapproxlip1}
\|  F_r*g-g\|_{L^p(\R)} \lesssim \frac{\log r}{r}, \qquad r\to \infty,
\end{equation}
cf. \cite[Problem 3.4.2]{BN}. In the case of the Ces\`aro operator, Corollary~\ref{CORcesaro} yields, for any $1<p<\infty$ and $f\in \Lambda^p_\alpha(\R)$
\[
\| \mathcal{C}^*f-(\mathcal{C}_N \widehat{f}\,)\check{}\|_{L^p(\R)} \lesssim \int_0^1 \frac{ \omega(f;\frac{t}{N})_p}{t}\, dt
\lesssim N^{-\alpha} \int_0^1 t^{\alpha-1}\,dt\asymp N^{-\alpha},\qquad N\to\infty,
\]
and
\[
\| \mathcal{C}^*f-(\mathcal{C}_N \widehat{f}\,)\check{}\|_{L^1(\R)} \lesssim \int_0^1 \omega\Big(f;\frac{t}{N}\Big)_1\frac{|\log t|}{t}\,dt 
\lesssim N^{-\alpha}\int_0^1 t^{\alpha-1}|\log t|\,dt\asymp N^{-\alpha},\qquad N\to \infty.
\]
These approximation rates are the same as those for approximate identities when restricted to functions 
$f\in \Lambda^p_\alpha(\R)$ with $0<\alpha<1$ (compare with \eqref{EQapproxlipa}), and are actually better than their counterpart in 
the case $\alpha=1$ (compare with \eqref{EQapproxlip1}), in the sense that the extra logarithm from \eqref{EQapproxlip1} does not appear.

As for the case $p=\infty$, we have the following estimates for approximate identities. Let $f$ be a bounded function. If $f\in \Lambda^\infty_\alpha(\R)$ with $0<\alpha<1$, then \cite[Corollary~3.4.4]{BN}
\[
\| F_r*g-g\|_{L^\infty (\R)} \lesssim \frac{1}{r^\alpha},\qquad r\to\infty,
\]
whilst if $f\in \Lambda^\infty_1(\R)$, then, similarly as in \eqref{EQapproxlip1},
\[
\|F_r*g-g\|_{L^\infty (\R)}  \lesssim \frac{\log r}{r}, \qquad r\to \infty,
\]
see \cite[Problem 3.4.2]{BN}. Corollary~\ref{CORcesaro} yields the following rates of approximation 
for Ces\`aro operators for $f\in \Lambda^\infty_\alpha(\R)$ with $0<\alpha\leq 1$,
\[
\| \mathcal{C}^*f-(\mathcal{C}_N \widehat{f}\,)\check{}\|_{L^\infty(\R)}\lesssim \int_0^1 \frac{\omega(f;\frac{t}{N})}{t} \, dt 
\lesssim N^{-\alpha} \int_0^1 t^{\alpha-1}\,dt\asymp N^{-\alpha},\qquad N\to\infty.
\]
Again we obtain the same rates of approximation for functions $f\in \Lambda^\infty_\alpha(\R)$ as those for approximate identities, 
whenever $0<\alpha<1$. On the other hand, in the case $f\in\Lambda^\infty_1(\R)$, the ``Hausdorff" approximation rate is 
better than \eqref{EQapproxlip1}, differing by a logarithm.

\subsection{Approximating the adjoint Ces\`aro operator}

We now wish to approximate the adjoint Ces\`aro operator (Bellman operator) $\mathcal{B}$. Its adjoint $\mathcal{B}^*$ is defined by letting $a(t)=1/t$ and 
$\varphi(t)=t^{-1}\chi_{(1,\infty)}(t)$ in (\ref{adjh}):
\[
\mathcal{B}^* f(x)=\int_1^\infty \frac{f(tx)}{t}\,dt=\frac{1}{x}\int_x^\infty \frac{f(t)}{t}\,dt.
\]
It will be approximated by means of
\[
(\mathcal{B}_N \widehat{f}\,)\check{}\,(x):=\frac1{\pi}\int_1^\infty \frac{1}{t^2}\int_{\mathbb R} f(s)\frac{\sin N(x-s/t)}{x-s/t}\,ds\,dt.
\]
For $1\leq p<\infty$, one has, by Theorem~\ref{THMp<infty},
\begin{align}
\| \mathcal{B}^*f-(\mathcal{B}_N \widehat{f}\,)\check{}\|_{L^p(\R)}&\leq  2\int_1^\infty \frac{\omega(f;\frac{t}{N})_p}{t^{1+1/p}}\, dt
+\int_\R\frac{\omega(f;\frac{|s|}{N})_p }{|s|}\int_{|s|\leq |t|}\frac{1}{t^{1+1/p}}\chi_{(1,\infty)}(t)\, dt\,ds\nonumber\\
&\asymp \int_0^\infty \frac{\omega(f;\frac{|s|}{N})_p }{|s|}\,ds, \label{EQdualapprox}
\end{align}
whilst in the case $p=\infty$, Theorem~\ref{THMp=infty} yields
\[
\pi \| \mathcal{B}^*f-(\mathcal{B}_N \widehat{f}\,)\check{}\|_{L^\infty(\R)}\leq 2\int_1^\infty  
\frac{\omega(f;\frac{t}{N})}{t} \, dt + \int_\R \frac{ \omega(f;\frac{|s|}{N})}{|s|}\int_{|s|\leq|t|} \frac{1}{t}\chi_{(1,\infty)}(t) \,dt \, ds=\infty,
\]
i.e., in this case we cannot guarantee any convergence on the $L^\infty$ norm by using our estimates, even for well-behaved functions $f$. 
As was pointed out in Remark~\ref{REM1}, this is because in order to obtain useful estimates from Theorems~\ref{THMp<infty}~and~\ref{THMp=infty}, 
one should assume that $\varphi$ is of compact support, or that decays fast enough as $|t|\to \infty$. For the adjoint Ces\`aro operator the functions 
$\varphi$ has some decay, but it is certainly not enough. The estimate \eqref{EQdualapprox} is also not very promising.

\subsection{The Riemann-Liouville integral}
For $\alpha>0$, the Riemann-Liouville integral is defined as
\[
\mathscr{I}^\alpha f(x)= \frac{1}{\Gamma(\alpha)} \int_0^x f(t) (x-t)^\alpha\,dt =\frac{x^\alpha}{\Gamma(\alpha)}\int_0^x f(t) \Big(1-\frac{t}{x}\Big)^\alpha\,dt.
\]
A re-scaled version of this operator may be easily obtained as a dual Hausdorff operator. 
Indeed, for $a(t)=1/t$ and $\varphi(t)=(1-t)^\alpha \chi_{(0,1)}(t)$, we have
\[
\mathcal{I}^\alpha(f)(x):= \mathcal{H}^*f(x)= \int_0^1 f(tx) (1-t)^\alpha \,dt = \frac{1}{x} 
\int_0^1 f(t)\Big(1-\frac{t}{x} \Big)^\alpha\,dt = \Gamma (\alpha) x^{-\alpha-1}\mathscr{I}^\alpha f(x).
\]
Using Theorems~\ref{THMp<infty}~and~\ref{THMp=infty} we approximate $\mathcal{I}^\alpha(f)$ by
\[
(\mathcal{I}^\alpha_N \widehat{f}\,)\check{}\,(x):=(\mathcal{H}_N \widehat{f}\,)\check{}\,(x)=\frac1{\pi}\int_{0}^1 
\frac{(1-t)^\alpha}{t} \int_{\mathbb R} f(s)\frac{\sin N(x-s/t)}{x-s/t}\,ds\,dt.
\]
In $L^p$, for $1<p<\infty$, we have
\begin{align*}
\| \mathcal{I}^\alpha f-(\mathcal{I}^\alpha_N \widehat{f}\,)\check{}\|_{L^p(\R)}&\leq 4\int_0^1 \frac{(1-t)^\alpha}{t^{1/p}} 
\omega\Big(f;\frac{t}{N}\Big)_p\, dt+2\int_\R\frac{\omega(f;\frac{|s|}{N})_p}{|s|}\int_{|s|\leq |t|}\frac{(1-t)^\alpha}{t^{1/p}}\chi_{(0,1)}(t)\, dt\,ds\\
&\asymp \int_0^1 \frac{(1-t)^\alpha}{t^{1/p}} \omega\Big(f;\frac{t}{N}\Big)_p\,dt+ \int_0^1\frac{\omega(f;\frac{s}{N})_p }{s}
\int_{s}^1\frac{(1-t)^\alpha}{t^{1/p}}\, dt\,ds\asymp \int_0^1\frac{\omega(f;\frac{t}{N})_p }{t}\,dt.
\end{align*}
Secondly, for $L^1$, we obtain
\begin{align*}
\| \mathcal{I}^\alpha f-(\mathcal{I}^\alpha_N \widehat{f}\,)\check{}\|_{L^1(\R)}& \leq 2\int_0^1 \frac{(1-t)^\alpha}{t} 
\omega\Big(f;\frac{t}{N}\Big)_1\, dt+2\int_0^1\frac{\omega(f;\frac{s}{N})_1 }{s}\int_{s}^1 \frac{(1-t)^\alpha}{t}\, dt\,ds,\\
&\asymp \int_0^1 \frac{|\log t|}{t}\omega\Big(f;\frac{t}{N}\Big)_1\, dt,
\end{align*}
and finally, in $L^\infty$ we have the estimate
\[
\pi \| \mathcal{I}^\alpha f-(\mathcal{I}^\alpha_N \widehat{f}\,)\check{}\|_{L^\infty(\R)} \leq 2\int_0^1 (1-t)^\alpha  
\omega\Big(f;\frac{t}{N}\Big) \, dt +2 \int_0^1 \frac{ \omega(f;\frac{s}{N})}{s}
\int_{s}^1 (1-t)^\alpha \,dt \, ds\asymp \int_0^1 \frac{ \omega(f;\frac{s}{N})}{s}\, ds.
\]
Note that these estimates are the same as those for the Ces\`aro operator. More generally, one can readily see from the estimates 
in Theorems~\ref{THMp<infty}~and~\ref{THMp=infty} that, fixing $a(t)=1/t$, any Hausdorff operator with $\varphi$ supported on $(0,1)$ such that  
$\varphi(t)\asymp 1$ in some neighbourhood $(0,\varepsilon)$ will satisfy such approximation estimates. In fact, if $\varphi$ is supported 
on $(0,r)$ with $r>0$ rather than on $(0,1)$, these estimates are still valid, with the integration carried out over $(0,r)$.

\section{Final remarks}\label{SEC5}
We conclude with a couple of remarks: first, we show that one may use the same approach to approximate the Hausdroff operator (instead of its adjoint) applied to a function. Secondly, we give an application of Theorem~\ref{THMp=infty} to approximate functions in the $L^\infty$ norm.
\subsection{Approximation of non-adjoint Hausdorff operators}
One can also approximate the Hausdorff operator instead of its adjoint, if one considers the adjoint Hausdorff averages in the approximant. More precisely, it is also possible to approximate $\mathcal{H}f(x)$ by
\begin{align*}
(\mathcal{H}^*_N \widehat{f})\check{}(x)&=\frac1{2\pi}\int_{-N}^N \mathcal{H}^*\widehat{f}(u)e^{iux}\,du=\frac1{2\pi}\int_{-N}^N\int_\R\varphi(t) \int_\R f(s) e^{-isu/a(t)}ds\,dt\,e^{iux}du\\
&=\frac1\pi \int_\R \varphi(t)\int_\R f(s)\frac{\sin N(x-s/a(t))}{x-s/a(t)}\,ds\,dt
\end{align*}
which, by substitutions, is easily seen to equal
\[
\frac1\pi \int_\R \varphi(t)|a(t)| \int_\R f\Big( a(t)x -\frac{s}{N}\Big) \frac{\sin \frac{s}{a(t)}}{s}\,ds\,dt.
\]
Since for any $t\neq 0$ one has
\[
\int_{\R}\frac{\sin\frac{s}{a(t)}}{s}\,du=\pi,
\]
then
\[
\mathcal{H}f(x)-(\mathcal{H}^*_N \widehat{f})\check{}(x) =\frac{1}{\pi} \int_\R \varphi(t)|a(t)| \int_\R \Big( f\Big(a(t) x-\frac{s}{N} \Big)-f\big(a(t)x\big) \Big)\frac{\sin \frac{s}{a(t)}}{s}\,ds\,dt.
\]
A similar estimate to  that of Lemma~\ref{LEMpointwise} can now be proved.
\begin{lemma}\label{LEMpointwise2}
	For any $x\in \R$,
	\begin{align*}
	\pi | \mathcal{H}f(x)-(\mathcal{H}^*_N \widehat{f})\check{}(x)|&\leq  \int_{\R} \varphi(t) \int_{|s|\leq |a(t)|} \Big| f\Big(a(t) x-\frac{s}{N} \Big)-f\big(a(t)x\big) \Big|\,ds\,dt \\
	& \phantom{=} +  \int_{\R} \frac1s\int_{|t|\geq |a^{-1}(s)|} \Big| f\Big(a(t) x-\frac{s}{N} \Big)-f\big(a(t)x\big) \Big| \varphi(t)|a(t)|\, dt\, ds,
	\end{align*}
\end{lemma}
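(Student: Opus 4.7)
The proof will essentially parallel that of Lemma~\ref{LEMpointwise}, with the roles of the scaling variables interchanged. The plan is to start from the identity displayed just before the lemma statement, namely
\[
\mathcal{H}f(x)-(\mathcal{H}^*_N \widehat{f})\check{}(x) =\frac{1}{\pi} \int_\R \varphi(t)|a(t)| \int_\R \Big( f\Big(a(t) x-\frac{s}{N} \Big)-f\big(a(t)x\big) \Big)\frac{\sin \frac{s}{a(t)}}{s}\,ds\,dt,
\]
take absolute values, and split the inner $s$-integral at the critical threshold $|s|=|a(t)|$, which is the natural split dictated by the oscillatory kernel $\sin(s/a(t))/s$.

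On the near region $|s|\leq |a(t)|$, I would use the elementary bound $|\sin u|\leq |u|$ applied with $u=s/a(t)$, which gives $|\sin(s/a(t))/s|\leq 1/|a(t)|$. Combined with the outer factor $|a(t)|$, this absorbs into a constant and produces exactly the first summand
\[
\int_{\R} |\varphi(t)| \int_{|s|\leq |a(t)|} \Big| f\Big(a(t) x-\frac{s}{N} \Big)-f\big(a(t)x\big) \Big|\,ds\,dt.
\]

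On the far region $|s|\geq |a(t)|$, I would instead use the trivial bound $|\sin(s/a(t))/s|\leq 1/|s|$ and then interchange the order of integration via Fubini. The key point, already exploited in the proof of Lemma~\ref{LEMpointwise}, is that since $|a|$ is positive, decreasing, and bijective on $(0,\infty)$ (and $a$ is odd), the inequality $|s|\geq |a(t)|$ is equivalent to $|t|\geq |a^{-1}(s)|$, where $|a|^{-1}(\,\cdot\,)=|a^{-1}(\,\cdot\,)|$ on $(0,\infty)$. Rewriting the resulting integral in the order $ds\,dt \to dt\,ds$ then yields the second summand of the stated inequality.

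The main obstacle, if any, is purely bookkeeping: one must check that the correspondence $|s|\geq |a(t)| \Leftrightarrow |t|\geq |a^{-1}(s)|$ is applied consistently to both positive and negative $s$ (using the oddness of $a$), and that the factor $|a(t)|$ is correctly attached in the second term after the rearrangement. These are the same manipulations used in the last step of the proof of Lemma~\ref{LEMpointwise}, so no new analytic ideas are needed; the assertion follows by collecting the two bounds.
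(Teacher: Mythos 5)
Your proposal is correct and takes essentially the same route as the paper's proof: split the inner $s$-integral at $|s|=|a(t)|$, bound the kernel by $1/|a(t)|$ (via $|\sin u|\leq |u|$) on the near piece and by $1/|s|$ on the far piece, then apply Fubini together with the monotonicity of $|a|$ to rewrite the far region as $|t|\geq |a^{-1}(s)|$. There is nothing substantive to add or correct.
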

\begin{proof} 
	The proof is essentially the same as that of Lemma~\ref{LEMpointwise},
	\begin{align*}
	\pi | \mathcal{H}f(x)-(\mathcal{H}^*_N \widehat{f})\check{}(x)| &= \int_{\R} \varphi(t)|a(t)| \int_{|s|\leq |a(t)|} \Big( f\Big(a(t) x-\frac{s}{N} \Big)-f\big(a(t)x\big) \Big)\frac{\sin \frac{s}{a(t)}}{s}\,ds\,dt\\
	&\phantom{=}+\int_{\R} \varphi(t)|a(t)| \int_{|s|\geq |a(t)|} \Big( f\Big(a(t) x-\frac{s}{N} \Big)-f\big(a(t)x\big) \Big)\frac{\sin \frac{s}{a(t)}}{s}\,ds\,dt\\
	&\leq \int_{\R} \varphi(t) \int_{|s|\leq |a(t)|} \Big| f\Big(a(t) x-\frac{s}{N} \Big)-f\big(a(t)x\big) \Big|\,ds\,dt\\
	&\phantom{=}+ \int_{\R} \varphi(t)|a(t)| \int_{|s|\geq |a(t)|} \frac{ \Big| f\Big(a(t) x-\frac{s}{N} \Big)-f\big(a(t)x\big) \Big|}{|s|}\,ds\,dt,\\
	&\leq  \int_{\R} \varphi(t) \int_{|s|\leq |a(t)|} \Big| f\Big(a(t) x-\frac{s}{N} \Big)-f\big(a(t)x\big) \Big|\,ds\,dt \\
	& \phantom{=} +  \int_{\R} \frac1s\int_{|t|\geq |a^{-1}(s)|} \Big| f\Big(a(t) x-\frac{s}{N} \Big)-f\big(a(t)x\big) \Big| \varphi(t)|a(t)|\, dt\, ds,
	\end{align*}
	as desired.
\end{proof}
By means of the pointwise estimate from Lemma~\ref{LEMpointwise2} it is possible to obtain approximation results analogous to Theorems~\ref{THMp<infty}~and~\ref{THMp=infty}, where the Hausdorff operator, rather than its adjoint, is approximated. The details are essentially the same and are thus omitted.
\subsection{Approximation of functions}
The fact that either the Hausdorff operator of a function or is adjoint is approximated is unsatisfactory in principle, as one would like to approximate the function itself. However, approximating a function instead of its (adjoint) Hausdorff operator is also possible, under the following observation. For $\varphi\in L^1(\R)$ and $a(t)$ as in the Introduction, one has
\[
\mathcal{H}^*_{\varphi,a}f (0) =\int_\R \varphi(t) f\Big( \frac{0}{a(t)}\Big)\,dt=f(0)\int_\R \varphi(t)\,dt.
\]
If we denote by $\tau_yf(x)=f(x+y)$ the translation of $f$ by $y\in \R$, and assume that $\int_\R \varphi(t)\,dt=1$, then
\[
\mathcal{H}^*_{\varphi,a}  [\tau_y f](0)=f(y).
\]
This gives a natural way of approximating $f$ through Hausdorff operators, namely, if we define the function $F_N(y)= (\mathcal{H}_N \widehat{ \tau_yf}\,)\check{}\,(0)=(\mathcal{H}_N[e^{iy\cdot}\widehat{f}(\cdot)]\,)\check{}\,(0)$, $y\in \R$, then in particular Theorem~\ref{THMp=infty} asserts that
\[
\pi \| f(y)-F_N(y)\|_{L^\infty(\R)}\leq 2\int_\R |\varphi(t)| \omega\Big(f;\frac{1}{N|a(t)|}\Big) \, dt\nonumber\\
+ \int_\R \frac{ \omega(f;\frac{|s|}{N})}{|s|}\int_{|a^{-1}(1/s)|\leq|t|} |\varphi( t)| \,dt \, ds ,
\]
since $\omega(f;\delta)=\omega(\tau_yf;\delta)$ for any $y\in \R$ and $\delta>0$.
%
%The case of the Ces\`aro operator is of especial interest. First of all, it is necessary to point out the trivial fact that
%\[
%\mathcal{C}^*f(0)=\int_0^1 f(0\cdot t)\, dt =f(0).
%\]
%Therefore, denoting by $\tau_yf(x)=f(x+y)$ the translation of $f$ by $y\in \R$, we have
%\[
%\mathcal{C}^*[\tau_y f](0)=\int_0^1 \tau_y f(0\cdot t)\, dt =f(y).
%\]
%More generally, if $\varphi(t)$ is such that $\int_\R \varphi(t)\,dt=1$ and the function $a(t)$ is as in the introduction, then 
%\[
%\mathcal{H}^*_\varphi f (0) =\int_\R \varphi(t) f\Big( \frac{0}{a(t)}\Big)\,dt=f(0),
%\]
%and similarly as in the case of the Ces\`aro operator,
%\[
%\mathcal{H}^*_\varphi [\tau_y f](0)=f(y).
%\]
%
%\begin{remark}
%	The above is not an approximation of a function by a Hausdorff operator, but by a pointwise evaluation of a family of  Hausdorff operators.
%\end{remark}

\end{document}